\def\N{\mathbb{N}}
\def\Z{\mathbb{Z}}
\def\Q{\mathbb{Q}}
\def\F{\mathbb{F}}
\newtheorem{prop}{\bf Proposition}[section]
\newtheorem{thm}[prop]{\bf Theorem}
\newtheorem{lem}[prop]{\bf Lemma}
\begin{document}

\title[Hyperbolicity and uniformly Lipschitz affine actions on subspaces of $L^1$]{{\bf\Large Hyperbolicity and uniformly Lipschitz affine actions on subspaces of $L^1$}}

\author[I. Vergara]{Ignacio Vergara}
\address{Saint-Petersburg State University, 7/9 Universitetskaya Embankment, Saint Petersburg 199034, Russia.}
\curraddr{Departamento de Matem\'atica y Ciencia de la Computaci\'on, Universidad de Santiago de Chile, Las Sophoras 173, Estaci\'on Central, Chile.}

\email{ign.vergara.s@gmail.com}
\thanks{The research was carried out at the Leonhard Euler St. Petersburg International Mathematical Institute and supported by the Ministry of Science and Higher Education of the Russian Federation (Agreement № 075–15–2022–287 dated 06.04.2022).}

\makeatletter
\@namedef{subjclassname@2020}{%
  \textup{2020} Mathematics Subject Classification}
\makeatother

\subjclass[2020]{Primary 22D55; Secondary 20F67, 22D12}
%

\keywords{Affine actions, hyperbolic groups, subspaces of $L^1$, quasi-trees}

\begin{abstract}
We show that every hyperbolic group has a proper uniformly Lipschitz affine action on a subspace of an $L^1$ space. We also prove that every acylindrically hyperbolic group has a uniformly Lipschitz affine action on such a space with unbounded orbits. Our main tools are the $\mathbb{Q}$-bicombings on hyperbolic groups constructed by Mineyev and the characterisation of acylindrical hyperbolicity in terms of actions on quasi-trees by Balasubramanya.
\end{abstract}


\begingroup
\def\uppercasenonmath#1{} 
\let\MakeUppercase\relax 
\maketitle
\endgroup

\section{{\bf Introduction}}

This paper is a follow-up to \cite{Ver}, where we constructed proper affine uniformly Lipschitz actions on subspaces of $L^1$ for certain classes of groups, including residually finite hyperbolic groups. One of the goals of the present work is to remove the residual finiteness hypothesis from this result.

Let $\Gamma$ be a countable group and let $E$ be a Banach space. We denote by $\mathcal{B}(E)$ the algebra of bounded operators on $E$. An affine action of $\Gamma$ on $E$ is given by a representation $\pi:\Gamma\to\mathcal{B}(E)$ and a map $b:\Gamma\to E$ satisfying the cocycle identity:
\begin{align*}
b(st)=\pi(s)b(t)+b(s),\quad\forall s,t\in\Gamma.
\end{align*}
The affine action is then given by
\begin{align*}
s\cdot v=\pi(s)v+b(s),\quad\forall s\in\Gamma,\ \forall v\in E.
\end{align*}
Saying that this action is uniformly Lipschitz is equivalent to the fact that $\pi$ is uniformly bounded:
\begin{align*}
\sup_{s\in\Gamma}\|\pi(s)\| < \infty.
\end{align*}
In this case, the action is proper (resp. has bounded orbits) if and only if $b$ is proper (resp. bounded).

Here we are interested in subspaces of $L^1$. The relevance of these spaces in this context comes from the fact that the Haagerup property can be characterised in terms of proper affine isometric actions on $E\subset L^1$; see \cite[Corollary 1.5]{ChDrHa} and \cite[Corollary 6.23]{ChDrHa}. Thus, having a proper uniformly Lipschitz action on such a space can be regarded as a weaker form of the Haagerup property. We prove the following general criterion, which was already implicit in \cite{Ver}.

\begin{thm}\label{Thm_suff_cond}
Let $\Gamma$ be a countable group with identity element $e$, and let $H$ be a Hilbert space. Assume that there is a map $f:\Gamma\to H$ and a constant $M\geq 0$ such that
\begin{align*}
\|f(sx)-f(sy)\|^2\leq \|f(x)-f(y)\|^2 + M,\quad\forall s,x,y\in\Gamma.
\end{align*}
Then there exist a measure space $(\Omega,\mu)$, a closed subspace $E\subset L^1(\Omega,\mu)$, and a uniformly bounded representation $\pi:\Gamma\to\mathcal{B}(E)$ such that $\pi$ admits a cocycle $b:\Gamma\to E$ satisfying
\begin{align*}
\|b(s)\|_E=\|f(s)-f(e)\|_H +2,\quad\forall s\in\Gamma\setminus\{e\}.
\end{align*}
\end{thm}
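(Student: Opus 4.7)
My strategy is to realise $E$ as a closed subspace of $L^1(\Omega_0)\oplus_1\ell^1(\Gamma)$, which is naturally the $L^1$-space on $\Omega:=\Omega_0\sqcup\Gamma$ with $\mu_0$ on $\Omega_0$ and counting measure on $\Gamma$. The direct-sum shape mirrors the ``$\|f(s)-f(e)\|_H+2$'' splitting of the cocycle norm. Fix a standard linear Gaussian embedding $G:H\to L^1(\Omega_0,\mu_0)$, $G_v=\sum_n\langle v,e_n\rangle\xi_n$ for an orthonormal basis $(e_n)$ of $H$ and i.i.d.\ normal variables $(\xi_n)$, renormalised so that $\|G_v\|_{L^1}=\|v\|_H$. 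Let $V$ denote the space of finitely supported, zero-sum real measures on $\Gamma$, on which $\Gamma$ acts by left translation $s\cdot\delta_t=\delta_{st}$, and let $\iota:V\to H$ be the linear map $\iota(\sum\alpha_i\delta_{t_i})=\sum\alpha_i f(t_i)$. Embed
$$J:V\hookrightarrow L^1(\Omega_0)\oplus_1\ell^1(\Gamma),\qquad J(\xi):=\bigl(G_{\iota(\xi)},\,\xi\bigr),$$
and set $E:=\overline{J(V)}$.

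With this setup, the cocycle and representation are forced: put $b(s):=J(\delta_s-\delta_e)=\bigl(G_{f(s)-f(e)},\,\delta_s-\delta_e\bigr)$, which immediately yields $\|b(s)\|_E=\|f(s)-f(e)\|_H+2$, and define $\pi(s)J(\xi):=J(s\cdot\xi)$ on the dense subspace $J(V)$. The cocycle identity then follows from the identity $\delta_{st}-\delta_e=s\cdot(\delta_t-\delta_e)+(\delta_s-\delta_e)$ in $V$ together with the linearity of $J$.

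The heart of the proof, and the place where the hypothesis on $f$ is really used, is the uniform boundedness of $\pi$. Applying the hypothesis with $(s^{-1},sx,sy)$ in place of $(s,x,y)$ gives the reverse inequality, whence $\bigl|\|f(sx)-f(sy)\|^2-\|f(x)-f(y)\|^2\bigr|\leq M$. For $\xi=\sum\alpha_i\delta_{t_i}\in V$, polarisation gives
$$\|\iota(s\cdot\xi)\|_H^2=-\tfrac{1}{2}\sum_{i,j}\alpha_i\alpha_j\|f(st_i)-f(st_j)\|^2,$$
and substituting the symmetric estimate bounds the right-hand side by $\|\iota(\xi)\|_H^2+\tfrac{M}{2}\|\xi\|_{\ell^1}^2$. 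Using $\sqrt{a^2+\lambda b^2}\leq a+\sqrt{\lambda}\,b$ together with the fact that left translation is isometric on $\ell^1(\Gamma)$ then yields $\|\pi(s)J(\xi)\|_E\leq(1+\sqrt{M/2})\|J(\xi)\|_E$, so $\pi$ extends to $E$ with $\sup_s\|\pi(s)\|<\infty$.

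The main obstacle overcome by the construction is the uncontrolled defect term $\tfrac{M}{2}(\sum|\alpha_i|)^2$: it is not dominated by $\|\iota(\xi)\|_H$ alone, so attempting to run the construction inside $H$ itself (or its pure Gaussian image in $L^1$) fails to produce a uniformly bounded representation. Adjoining the $\ell^1(\Gamma)$ summand is precisely what dominates this defect by $\|\xi\|_{\ell^1}$, which explains why the ``$+2$'' appears inescapably in both the norm formula and the closed-subspace shape of $E$.
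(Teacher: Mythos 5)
Your proof is correct and follows essentially the same route as the paper: the paper works with the seminorm $\|v\|_f=\bigl(-\tfrac12\sum v(x)v(y)\|f(x)-f(y)\|^2\bigr)^{1/2}$ on zero-sum functions, which on that subspace equals your $\|\iota(v)\|_H$, adds the $\ell^1$ norm, cites \cite[Lemma 4.1]{Ver} for the isometric embedding into $L^1$ (your explicit Gaussian-plus-$\ell^1$ realisation), and runs the identical translation-defect estimate to get the bound $1+\sqrt{M/2}$. Your explicit observation that the hypothesis applied to $(s^{-1},sx,sy)$ yields the two-sided bound $\bigl|\|f(sx)-f(sy)\|^2-\|f(x)-f(y)\|^2\bigr|\leq M$ is a welcome clarification of a step the paper leaves implicit.
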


This result (and its proof) is the general procedure used in \cite{Ver} to construct proper actions. For groups acting on products of quasi trees, $f$ is given by the semimetric $d_a$ defined in \cite[\S 3.3]{Ver}. More precisely,
\begin{align*}
\|f(s)-f(t)\|^2=d_a(s\cdot a, t\cdot a),\quad\forall s,t\in\Gamma.
\end{align*}
For weakly amenable groups with Cowling--Haagerup constant $1$, $f$ is the map $R:\Gamma\to H$ given by \cite[Proposition 5.1]{Ver}.

It turns out that we can define such a map for every hyperbolic group, without using actions on products of quasi-trees, which are known to exist only in the residually finite case; see \cite{BeBrFu}. It is worth mentioning, however, that the existence of a non-residually finite hyperbolic group is a major open problem in geometric group theory.

Recall that a hyperbolic group is a finitely generated group whose Cayley graph with respect to any (equivalently one) finite generating set is hyperbolic. We refer the reader to \cite[\S 7]{Loh} for a detailed treatment. For every hyperbolic group $\Gamma$, Mineyev \cite{Min} constructed a quasi-geodesic, $\Gamma$-equivariant $\Q$-bicombing with bounded area. This construction was used by Yu \cite{Yu} to prove that every hyperbolic group has a proper isometric affine action on $\ell^p$ for $p$ large enough. Here we use Mineyev's result to prove the following.

\begin{thm}\label{Thm_hyp}
Every hyperbolic group admits a proper affine uniformly Lipschitz action on a subspace of an $L^1$ space.
\end{thm}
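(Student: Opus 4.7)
The plan is to apply Theorem~\ref{Thm_suff_cond}: it suffices to produce a map $f:\Gamma\to H$ into some Hilbert space satisfying
\[
\|f(sx)-f(sy)\|^2\leq \|f(x)-f(y)\|^2+M,\qquad\forall s,x,y\in\Gamma,
\]
and with $\|f(s)-f(e)\|\to\infty$ as $s$ leaves finite subsets of $\Gamma$. Theorem~\ref{Thm_suff_cond} will then automatically produce the subspace $E\subset L^1(\Omega,\mu)$, the uniformly bounded representation $\pi$, and the proper cocycle $b$ with $\|b(s)\|_E=\|f(s)-f(e)\|+2$.

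The key input is Mineyev's homological bicombing for hyperbolic groups: a $\Gamma$-equivariant assignment $q:\Gamma\times\Gamma\to\ell^1(E)$ on the edge set $E$ of a Cayley graph, satisfying $\partial q(x,y)=y-x$, $s\cdot q(x,y)=q(sx,sy)$, the quasi-geodesic estimate $\|q(x,y)\|_1\asymp d(x,y)$, and the thin-triangles bound $\|q(x,y)+q(y,z)+q(z,x)\|_1\leq C$ uniformly in $x,y,z$. After a standard symmetrisation we may further assume $q(y,x)=-q(x,y)$, at the cost of at most doubling $C$.

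The first step is to establish the linear $\ell^1$-comparison
\[
\|q(e,sx)-q(e,sy)\|_1 \leq \|q(e,x)-q(e,y)\|_1+2C.
\]
Applying thin triangles to the triple $(e,s,sx)$ yields $q(e,sx)=q(s,sx)+q(e,s)+\varepsilon_{s,x}$ with $\|\varepsilon_{s,x}\|_1\leq C$, and analogously for $sy$; subtracting these two identities the $q(e,s)$-terms cancel, and equivariance rewrites $q(s,sx)-q(s,sy)$ as $s\cdot(q(e,x)-q(e,y))$. Since the $\Gamma$-action on $\ell^1(E)$ by permutation of edges is isometric, the displayed estimate follows.

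The second step, which is where the real idea lies, is to promote this linear $\ell^1$-bound to the required quadratic $L^2$-bound. For this I would apply the classical CND embedding $u:\ell^1(E)\hookrightarrow L^2(E\times\mathbb{R})$ defined by the signed level-set map
\[
u(\phi)(e,t)=\mathbf{1}_{(0,\phi(e)]}(t)-\mathbf{1}_{[\phi(e),0)}(t),
\]
which satisfies $\|u(\phi)-u(\psi)\|_{L^2}^2=\|\phi-\psi\|_{\ell^1}$ for all $\phi,\psi\in\ell^1(E)$. Setting $f(x):=u(q(e,x))$ converts the linear $\ell^1$-inequality of step one directly into the desired quadratic $L^2$-inequality with $M=2C$, and properness follows from $\|f(s)-f(e)\|^2=\|q(e,s)\|_1\asymp d(e,s)\to\infty$. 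Theorem~\ref{Thm_suff_cond} then delivers the desired action. The principal technical content is the linear $\ell^1$-estimate of step one, which is a direct consequence of thin triangles and equivariance; I expect the main conceptual obstacle to be spotting that one should work in $\ell^1$ throughout and only pass to Hilbert space through the level-set embedding, rather than trying to bound $\|q(e,sx)-q(e,sy)\|_2^2$ directly (which would leave an uncontrolled cross term).
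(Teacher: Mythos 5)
Your proposal is correct and follows essentially the same route as the paper: Mineyev's equivariant bicombing with bounded area gives the linear $\ell^1$-estimate (your error-term bookkeeping is just a repackaging of the paper's three-triangle decomposition), the quadratic embedding of $\ell^1$ into Hilbert space (which the paper cites rather than constructs via the level-set map) converts it into the hypothesis of Theorem~\ref{Thm_suff_cond}, and properness follows from quasi-geodesicity of $q$. The only cosmetic differences are that the anti-symmetry you obtain by symmetrisation is already part of Mineyev's statement as quoted in Theorem~\ref{Thm_Min}, and the paper only needs the lower bound $\|q[e,s]\|_1\geq d(e,s)$ for properness.
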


In the residually finite case, Dru\c{t}u and Mackay proved that there is such an action on the whole space $\ell^1$; see \cite{Dru_slides}.

Finally, we turn our attention to acylindrically hyperbolic groups. A group is said to be acylindrically hyperbolic if it admits a non-elementary acylindrical action on a hyperbolic space. We refer the reader to \cite{Osi} and \cite{Osi2} for details. This class includes non-elementary hyperbolic groups, (non-virtually abelian) mapping class groups, $\operatorname{Out}(\F_N)$ ($N\geq 2$), and infinitely presented graphical $Gr(7)$ small cancellation groups. In \cite{Bal}, Balasubramanya characterised acylindrical hyperbolicity by the existence of a non-elementary acylindrical action on a quasi-tree. This is our main tool for establishing the following result.

\begin{thm}\label{Thm_acyl}
Every acylindrically hyperbolic group admits an affine uniformly Lipschitz action on a subspace of an $L^1$ space with unbounded orbits.
\end{thm}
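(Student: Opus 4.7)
The plan is to apply Theorem~\ref{Thm_suff_cond} using a map $f$ derived from the action of $\Gamma$ on a quasi-tree. By Balasubramanya's characterisation, $\Gamma$ admits a non-elementary acylindrical action by isometries on some quasi-tree $T$; fix a basepoint $a\in T$. Non-elementarity provides a loxodromic element $g_0\in\Gamma$ with $d_T(a,g_0^n\cdot a)\to\infty$, which will later ensure unboundedness of orbits.

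I would take the Hilbert space to be $H=\ell^2(E(T))$, the space of square-summable real-valued functions on the edge set of $T$. For each ordered pair of vertices $x,y\in T$, choose a geodesic $\gamma_{x,y}$ in $T$ and view it as a signed $1$-chain with coefficients in $\{-1,0,+1\}$ encoding direction. Set $f(g):=\gamma_{a,g\cdot a}$. The central estimate to establish is
\[
\bigl|\|f(g)-f(h)\|_H^2-d_T(g\cdot a,h\cdot a)\bigr|\le C \qquad \forall g,h\in\Gamma,
\]
for some constant $C$ depending only on $T$. Indeed, $c:=f(g)-f(h)-\gamma_{h\cdot a,g\cdot a}$ is a $1$-cycle in $T$; the structure of a quasi-tree (via Manning's bottleneck criterion) forces $\|c\|_1\le K$ for some $K=K(T)$ that is uniform over the triangle. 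Expanding $\|\gamma_{h\cdot a,g\cdot a}+c\|_H^2$ and combining $\|\gamma_{h\cdot a,g\cdot a}\|_H^2=d_T(g\cdot a,h\cdot a)$ with the H\"older bound $|\langle\gamma_{h\cdot a,g\cdot a},c\rangle|\le\|c\|_1$ and $\|c\|_H^2\le\|c\|_1^2$ then yields the claimed additive (not multiplicative) distortion.

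Since $\Gamma$ acts on $T$ by isometries, we have $d_T(sx\cdot a,sy\cdot a)=d_T(x\cdot a,y\cdot a)$, and two applications of the displayed estimate give
\[
\|f(sx)-f(sy)\|_H^2 \le d_T(x\cdot a,y\cdot a)+C \le \|f(x)-f(y)\|_H^2+2C,
\]
which is the hypothesis of Theorem~\ref{Thm_suff_cond} with $M=2C$. The theorem then produces a uniformly bounded representation $\pi:\Gamma\to\mathcal{B}(E)$ on a closed subspace $E\subset L^1(\Omega,\mu)$ together with a cocycle $b$ satisfying $\|b(s)\|_E=\|f(s)-f(e)\|_H+2$. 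The affine orbit is unbounded because $\|b(g_0^n)\|_E\ge\sqrt{d_T(a,g_0^n\cdot a)-C}\to\infty$ by loxodromicity of $g_0$.

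The main obstacle is the uniform $\ell^1$-bound on the triangular cycles $\gamma_{a,x}+\gamma_{x,y}+\gamma_{y,a}$. For a general hyperbolic group acting on its Cayley graph, Theorem~\ref{Thm_hyp} requires Mineyev's substantial bicombing machinery, but for quasi-trees the stronger bottleneck geometry should make such a bound essentially combinatorial: geodesic triangles in $T$ differ from honest tripods by at most a constant determined by the bottleneck constant. A subtle technical point to verify is that the choice of geodesics $\gamma_{x,y}$, which is not canonical in a quasi-tree, can be made so that these cycle bounds are uniform in the triple of endpoints; in the absence of such a uniform choice one may instead work with a $\Gamma$-equivariant bounded-area bicombing on $T$ constructed analogously to Mineyev's.
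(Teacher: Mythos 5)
Your overall strategy---produce a map $f:\Gamma\to H$ satisfying $\bigl|\,\|f(g)-f(h)\|_H^2-d_T(g\cdot a,h\cdot a)\,\bigr|\le C$ and feed it to Theorem~\ref{Thm_suff_cond}---is exactly the paper's, and the algebra of that reduction is correct: isometry of the action gives the hypothesis with $M=2C$, and unboundedness of orbits in $T$ gives unboundedness of $b$. The difference is in how $f$ is built, and that is where the gap lies. The paper never touches geodesic $1$-chains on the quasi-tree: it quotes \cite[\S 3.3]{Ver}, which for any quasi-tree $X$ directly produces a Hilbert-space map with $d(x,y)-\Delta_X\le\|f(x)-f(y)\|_H^2\le d(x,y)$ (the semimetric $d_a$ mentioned in the introduction), so no bounded-area statement is ever needed.

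Your route hinges on the uniform $\ell^1$-bound $\|\gamma_{a,x}+\gamma_{x,y}+\gamma_{y,a}\|_1\le K$, and this is a genuine unproved claim, not a routine verification. The bottleneck property does not force it for arbitrary choices of geodesics: in a ladder graph (two parallel paths of length $n$ joined by rungs), which is a quasi-tree with uniform constants, two geodesics between the two ends can have symmetric difference of size comparable to $n$, so the cycle $c$ in your argument can have $\ell^1$-norm growing with the triangle. You flag this yourself and propose either a careful global choice of geodesics or a $\Gamma$-equivariant bounded-area bicombing on $T$ built ``analogously to Mineyev's''; neither is supplied, and the second is genuinely problematic because the quasi-trees furnished by Balasubramanya's theorem (projection complexes) are in general not locally finite and the action is not proper, so Mineyev's construction, designed for Cayley graphs of hyperbolic groups, does not transfer off the shelf. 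As written the proof is incomplete at its central step; replacing your $f$ by the one from \cite[\S 3.3]{Ver} closes the gap and recovers the paper's argument.
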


Let us point out that this result was obtained independently by Dru\c{t}u and Mackay; see \cite{Dru_slides}. In fact, their result is stronger, as they construct actions on $\ell^1$ instead of a subspace. Nevertheless, we believe Theorem \ref{Thm_acyl} is relevant because the techniques used in the proof are different.

Finally, observe that Theorem \ref{Thm_acyl} cannot be improved in order to obtain proper actions for all acylindrically hyperbolic groups. Indeed, such an action yields a coarse embedding into a Hilbert space; see \cite[Corollary 6.2]{Ver}. On the other hand, if $\Gamma$ does not admit a coarse embedding, neither does $\Gamma\ast\Z$, and this group is acylindrically hyperbolic; see e.g. \cite[Corollary 2.2]{MinOsi}. For more examples of acylindrically hyperbolic groups without coarse embeddings, see \cite{GruSis}.

\subsection*{Organisation of the paper}
In Section \ref{S_general}, we prove Theorem \ref{Thm_suff_cond}. Section \ref{S_bic} is devoted to $\Q$-bicombings on hyperbolic groups and the proof of Theorem \ref{Thm_hyp}. Finally, in Section \ref{S_acyl}, we look at actions on quasi-trees and prove Theorem \ref{Thm_acyl}.

\section{{\bf A sufficient condition for actions on $E\subset L^1$}}\label{S_general}

We begin by proving Theorem \ref{Thm_suff_cond}. The proof is essentially the same as that of \cite[Theorem 1.1]{Ver}, but in a more general context. We refer the reader to \cite[\S 4]{Ver} for more details on some of our arguments.

\begin{proof}[Proof of Theorem \ref{Thm_suff_cond}]
Let $f:\Gamma\to H$ be the map given by the hypothesis, and let $V$ be the vector space of real-valued, finitely supported functions on $\Gamma$ with mean $0$. For $v\in V$, we define
\begin{align*}
\|v\|_f = \left(-\frac{1}{2}\sum_{x,y\in\Gamma}v(x)v(y)\|f(x)-f(y)\|_H^2\right)^\frac{1}{2},
\end{align*}
and we let $E$ be the completion of $V$ for the norm
\begin{align*}
\|v\|_E = \|v\|_f + \|v\|_1,
\end{align*}
where
\begin{align*}
\|v\|_1 = \sum_{x\in\Gamma}|v(x)|.
\end{align*}
Then there exists a measure space $(\Omega,\mu)$ such that $E$ embeds into $L^1(\Omega,\mu)$ isometrically; see \cite[Lemma 4.1]{Ver}. For all $v\in V$ and $s\in\Gamma$, let us define $\pi(s)v$ by
\begin{align*}
\pi(s)v(x)=v(s^{-1}x),\quad\forall x\in\Gamma.
\end{align*} 
Now observe that
\begin{align*}
\|\pi(s)v\|_f^2-\|v\|_f^2 &= \frac{1}{2}\sum_{x,y\in\Gamma}v(x)v(y)\left(\|f(x)-f(y)\|_H^2-\|f(sx)-f(sy)\|_H^2\right)\\
&\leq \frac{M}{2}\left(\sum_{x\in\Gamma}|v(x)|\right)^2.
\end{align*}
This implies that
\begin{align*}
\|\pi(s)v\|_E &= \|\pi(s)v\|_f + \|\pi(s)v\|_1\\
&\leq \sqrt{\frac{M}{2}}\|v\|_1 + \|v\|_f + \|v\|_1\\
&\leq \left(\sqrt{\frac{M}{2}}+1\right)\|v\|_E.
\end{align*}
Therefore $\pi$ extends to a uniformly bounded representation on $E$. Finally, we define $b:\Gamma\to E$ by
\begin{align*}
b(s)=\delta_s-\delta_e,\quad\forall s\in\Gamma.
\end{align*}
Then $b$ is a cocycle for $\pi$, and
\begin{align*}
\|b(s)\|_E &= \|b(s)\|_f + \|b(s)\|_1\\
&= \|f(s)-f(e)\|_H + 2,
\end{align*}
for all $s\in\Gamma\setminus\{e\}$.
\end{proof}

\section{{\bf From $\Q$-bicombings to affine actions on $E\subset L^1$}}\label{S_bic}

In this section we prove Theorem \ref{Thm_hyp}. For this purpose, we recall the definition of $\Q$-bicombings, in the sense of \cite{Min}.

Let $\Gamma$ be a finitely generated group and let $X=(X^{(0)},X^{(1)})$ be its Cayley graph with respect to a finite generating set. Here we denote by $X^{(0)}$ and $X^{(1)}$ the set of vertices and (oriented) edges respectively. We view $X^{(0)}$ as a metric space by endowing it with the edge-path distance. In other words, for $x,y\in X^{(0)}$, $d(x,y)$ is defined as the length of the shortest path (without orientation) between $x$ and $y$.

A bicombing on $X$ is a map $p$ associating to each ordered pair of vertices $(x,y)$ an oriented edge-path $p[x,y]$ from $x$ to $y$. We say that $p$ is geodesic if $p[x,y]$ is a geodesic path for every $x,y\in X^{(0)}$.

We denote by $\Q[X^{(1)}]$ and $\Q[X^{(0)}]$ the space of $1$-chains and $0$-chains respectively. These are finitely supported functions on $X^{(i)}$ ($i=0,1$) with values in $\Q$. The boundary map $\partial:\Q[X^{(1)}]\to\Q[X^{(0)}]$ is defined on each edge $(u,v)\in X^{(1)}$ by
\begin{align*}
\partial(u,v)=v-u,
\end{align*}
and extended by linearity. Observe that, if $p$ is a bicombing, we may view $p[x,y]$ as a $1$-chain such that $\partial p[x,y]=y-x$.

The set $X^{(1)}$ forms a basis for $\Q[X^{(1)}]$, and we can thus consider the $\ell^1$ norm with respect to this basis. More precisely, for all $\alpha\in\Q[X^{(1)}]$, we define
\begin{align*}
\|\alpha\|_1=\sum_{\omega\in X^{(1)}}|\alpha(\omega)|.
\end{align*}
Observe that we can view $\alpha$ as a function on $X^{(0)}\times X^{(0)}$ such that $\alpha(u,v)=0$ if $(u,v)\notin X^{(1)}$. This allows us to write
\begin{align}\label{l1_norm_alpha}
\|\alpha\|_1=\sum_{u,v\in X^{(0)}}|\alpha(u,v)|.
\end{align}

A $\Q$-bicombing is a map $q$ that assigns, to each ordered pair of vertices $(x,y)$, a $1$-chain $q[x,y]$ such that $\partial q[x,y]=y-x$. We say that $q$ is quasi-geodesic if
\begin{itemize}
\item[$\bullet$] there exists a geodesic bicombing $p$ and a constant $r>0$ such that, for every $x,y\in X^{(0)}$, the support of $q[x,y]$ is contained in an $r$-neighbourhood of $p[x,y]$,
\item[$\bullet$] there is a constant $C>0$ such that, for all $x,y\in X^{(0)}$,
\begin{align*}
\|q[x,y]\|_1\leq Cd(x,y).
\end{align*}
\end{itemize}

The following result will be crucial for our purposes; see \cite[Theorem 10]{Min}.

\begin{thm}[Mineyev]\label{Thm_Min}
Let $\Gamma$ be a hyperbolic group and let $X$ be its Cayley graph with respect to a finite generating set. There exists a quasi-geodesic $\Q$-bicombing $q$ on $X$ with the following properties:
\begin{itemize}
\item[a)] $q$ is $\Gamma$-equivariant: $sq[x,y]=q[sx,sy]$, $\forall s\in\Gamma$, $\forall x,y\in X^{(0)}$.
\item[b)] $q$ is anti-symmetric: $q[y,x]=-q[x,y]$, $\forall x,y\in X^{(0)}$.
\item[c)] $q$ has bounded area: there exists $M>0$ such that, for all $x,y,z\in X^{(0)}$,
\begin{align*}
\left\|q[x,y]+q[y,z]+q[z,x]\right\|_1\leq M.
\end{align*}
\end{itemize}
\end{thm}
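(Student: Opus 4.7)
The plan is to build the bicombing in stages, starting from a naive equivariant choice and then refining it to obtain bounded area. Since $X$ is locally finite, one can first pick an equivariant family of geodesic paths $g[x,y]$ between ordered pairs of vertices by choosing a representative pair from each diagonal $\Gamma$-orbit and propagating; viewing paths as $1$-chains, symmetrising via $\tfrac{1}{2}(g[x,y]-g[y,x])$ makes the family anti-symmetric. This yields a quasi-geodesic, equivariant, anti-symmetric bicombing essentially for free. The real obstruction is bounded area: the cycle $g[x,y]+g[y,z]+g[z,x]$ need not have uniformly bounded $\ell^1$ norm, since its support can stretch along the full perimeter of the triangle.

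The idea I would pursue is to pass from paths to real-valued $1$-chains, i.e.\ a homological bicombing. For each ordered pair $(x,y)$ one constructs $q[x,y]$ as a $\Gamma$-equivariantly chosen weighted sum of edges concentrated in a bounded neighbourhood of a geodesic from $x$ to $y$, with weights produced by a $\Gamma$-invariant gadget on the Gromov boundary (for example, via horofunctions or Busemann functions). Hyperbolicity then enters through the $\delta$-thin triangle property: for any triple $x,y,z$ the three fattened geodesics coincide outside a bounded neighbourhood of the centre of the triangle, so in the sum $q[x,y]+q[y,z]+q[z,x]$ the contributions away from the centre should telescope, leaving a cycle whose $\ell^1$ norm is controlled by a constant depending only on $\delta$ and the valence of $X$.

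The main obstacle will be this bounded-area step. One needs the smoothed chains to be sufficiently well aligned along most of the geodesic for the triangle cycle to cancel away from a uniformly bounded region, and this forces the weighting scheme to be genuinely canonical---essentially coming from data at infinity---so that small perturbations of the endpoints produce only small changes in the interior of the chain. I expect the bulk of the technical work, as in Mineyev's original construction, to lie precisely here: designing a weight function that is simultaneously $\Gamma$-equivariant, concentrated in a bounded corridor around the geodesic, and stable enough under endpoint perturbation that the triple-cycle estimate closes.
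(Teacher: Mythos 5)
This theorem is not proved in the paper at all: it is quoted verbatim from Mineyev (cited as \cite[Theorem 10]{Min}), so your attempt has to be measured against Mineyev's construction rather than against anything in this text. The preliminary steps of your plan are fine: since $\Gamma$ acts freely on the vertices of its Cayley graph, choosing one geodesic per diagonal orbit and propagating gives an equivariant quasi-geodesic family, and the anti-symmetrisation $\tfrac12(g[x,y]-g[y,x])$ costs nothing once one agrees to work with $1$-chains rather than genuine edge-paths (which is indeed what Mineyev's homological bicombing is). You also correctly locate the entire content of the theorem in property (d).

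But that is where the proposal stops being a proof. The assertion that, by $\delta$-thinness, the three fattened geodesics ``coincide outside a bounded neighbourhood of the centre'' so that the triangle cycle ``should telescope'' is precisely the statement to be proved, and thin triangles alone do not give it: they give that the geodesics from $x$ to $y$ and from $x$ to $z$ \emph{fellow-travel} near $x$, not that any reasonable chains supported near them are \emph{equal} there, and two $1$-chains can be supported within distance $\delta$ of each other while their difference has $\ell^1$ norm proportional to the length of the overlap. Upgrading proximity to actual cancellation is the heart of Mineyev's argument: he does not weight edges by boundary data such as Busemann or horofunctions, but builds $q[x,y]$ by iterating an averaged radial projection of $y$ towards $x$ and proves an exponential contraction estimate showing that the flows started at $y$ and at $z$ merge, in the $\ell^1$ sense, at a geometric rate past the centre of the triangle; the bounded-area constant is then the sum of a geometric series. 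Your proposal explicitly defers this step (``I expect the bulk of the technical work\dots to lie precisely here''), so as it stands it is a plausible outline with the decisive estimate missing, and the specific mechanism you suggest for producing the weights is not the one that is known to make the estimate close.
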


We will use these $\Q$-bicombings, together with Theorem \ref{Thm_suff_cond} in order to prove Theorem \ref{Thm_hyp}. For this purpose, we need the following lemma, which gives a lower bound for the $\ell^1$ norm of $1$-chains.

\begin{lem}\label{Lem_norm_alpha}
Let $(X^{(0)},X^{(1)})$ be a finite directed graph. Let $\alpha\in\Q[X^{(1)}]$ be a $1$-chain, and assume that there exist $x,y\in X^{(0)}$ such that
\begin{align*}
\partial\alpha=y-x.
\end{align*}
Then
\begin{align*}
\|\alpha\|_1\geq d(x,y).
\end{align*}
\end{lem}
\begin{proof}
We will proceed by induction on $n=d(x,y)$. If $n=1$, by \eqref{l1_norm_alpha}, we have
\begin{align*}
\|\alpha\|_1&\geq\frac{1}{2}\sum_{v\in X^{(0)}}\left(|\alpha(x,v)|+|\alpha(v,x)|+|\alpha(y,v)|+|\alpha(v,y)|\right)\\
&\geq \frac{1}{2}\left|\sum_{v\in X^{(0)}}\alpha(v,x)-\alpha(x,v)\right|+\frac{1}{2}\left|\sum_{v\in X^{(0)}}\alpha(v,y)-\alpha(y,v)\right|\\
&=\frac{1}{2}|\partial\alpha(x)|+\frac{1}{2}|\partial\alpha(y)|\\
&=1.
\end{align*}
Now assume that the result holds for $n$, and that $d(x,y)=n+1$. We will define a new graph $(Y^{(0)},Y^{(1)})$ by collapsing all the neighbours of $x$ to a single point. More precisely, the set of vertices will be
\begin{align*}
Y^{(0)}=\left(X^{(0)}\setminus\{u_1,\ldots,u_m\}\right)\cup\{x'\},
\end{align*}
where $u_1,\ldots,u_m$ are the neighbours of $x$ in $(X^{(0)},X^{(1)})$, and $x'$ is a new vertex. The set of edges will be given by
\begin{align*}
Y^{(1)}=&\{(x,x'),(x',x)\}\cup\left\{(x',v)\,\mid\, v\in Y^{(0)}\setminus\{x,x'\},\ \exists i\in\{1,\ldots,m\}, (u_i,v)\in X^{(1)}\right\}\\
&\cup \left\{(v,x')\,\mid\, v\in Y^{(0)}\setminus\{x,x'\},\ \exists i\in\{1,\ldots,m\}, (v,u_i)\in X^{(1)}\right\}\\
&\cup \left\{(u,v)\in X^{(1)}\,\mid\, u,v\notin\{x,u_1,\ldots,u_m\}\right\}.
\end{align*}
Let us denote by $d_Y$ the edge-path distance on this new graph. Then $d_Y(x',y)=n$. We can define a $1$-chain $\beta\in\Q[Y^{(1)}]$ as follows. 
\begin{align*}
\beta(x,x')=\beta(x',x)=0.
\end{align*}
For all $v\in Y^{(0)}\setminus\{x\}$,
\begin{align*}
\beta(x',v)&=\sum_{i=1}^m\alpha(u_i,v),\\
\beta(v,x')&=\sum_{i=1}^m\alpha(v,u_i),
\end{align*}
where, again, we view $\beta$ as a function on $Y^{(0)}\times Y^{(0)}$ with value $0$ where there is no edge. For $u,v\in Y^{(0)}\setminus\{x,x'\}$, we simply put
\begin{align*}
\beta(u,v)=\alpha(u,v).
\end{align*}
We claim that $\partial\beta=y-x'$. Indeed,
\begin{align*}
\partial\beta(x)&=\beta(x',x)-\beta(x,x')\\
&=0,
\end{align*}
and
\begin{align*}
\partial\beta(x')&=\beta(x,x')-\beta(x',x)+\sum_{v\in Y^{(0)}\setminus\{x\}}\beta(v,x')-\beta(x',v)\\
&=\sum_{v\in Y^{(0)}\setminus\{x\}}\sum_{i=1}^m\alpha(v,u_i)-\alpha(u_i,v)\\
&=\sum_{i=1}^m\partial\alpha(u_i)-\alpha(x,u_i)+\alpha(u_i,x)\\
&=\sum_{i=1}^m\alpha(u_i,x)-\alpha(x,u_i)\\
&=\partial\alpha(x)\\
&=-1.
\end{align*}
Now, for $w\in Y^{(0)}\setminus\{x,x'\}$,
\begin{align*}
\partial\beta(w)&=\beta(x',w)-\beta(w,x')+\sum_{v\in Y^{(0)}\setminus\{x,x'\}}\beta(v,w)-\beta(w,v)\\
&=\sum_{i=1}^m\alpha(u_i,w)+\alpha(w,u_i)+\sum_{v\in Y^{(0)}\setminus\{x,x'\}}\alpha(v,w)-\alpha(w,v)\\
&=\partial\alpha(w).
\end{align*}
This shows that $\partial\beta=y-x'$. Hence, by the induction hypothesis,
\begin{align*}
\|\beta\|_1\geq n.
\end{align*}
On the other hand,
\begin{align*}
\|\beta\|_1&=\left(\sum_{v\in Y^{(0)}\setminus\{x\}}|\beta(x',v)|+|\beta(v,x')|\right)+\sum_{u,v\in Y^{(0)}\setminus\{x,x'\}}|\beta(u,v)|\\
&\leq\left(\sum_{v\in Y^{(0)}\setminus\{x\}}\sum_{i=1}^m|\alpha(u_i,v)|+|\alpha(v,u_i)|\right)+\sum_{u,v\in Y^{(0)}\setminus\{x,x'\}}|\alpha(u,v)|\\
&=\|\alpha\|_1-\left(\sum_{i=1}^m|\alpha(u_i,x)|+|\alpha(x,u_i)|\right)\\
&\leq\|\alpha\|_1-\left|\sum_{i=1}^m\alpha(u_i,x)-\alpha(x,u_i)\right|\\
&=\|\alpha\|_1-|\partial\alpha(x)|\\
&=\|\alpha\|_1-1.
\end{align*}
Therefore
\begin{align*}
\|\alpha\|_1\geq\|\beta\|_1+1\geq n+1=d(x,y).
\end{align*}
\end{proof}

Now we are ready to prove Theorem \ref{Thm_hyp}.

\begin{proof}[Proof of Theorem \ref{Thm_hyp}]
Let $\Gamma$ be a hyperbolic group and let $X$ be its Cayley graph with respect to a finite generating set. Let $q$ be the $\Q$-bicombing given by Theorem \ref{Thm_Min}. By \cite[Theorem 6.8]{ChDrHa}, there is a Hilbert space $H$ and a map $J:\ell^1(X^{(1)})\to H$ such that
\begin{align*}
\|J(v)-J(w)\|_H^2=\|v-w\|_1,\quad\forall v,w\in \ell^1(X^{(1)}).
\end{align*}
Now define $f:\Gamma\to H$ by
\begin{align*}
f(x)=J(q[e,x]),\quad\forall x\in\Gamma.
\end{align*}
Thus, for every $s,x,y\in\Gamma$,
\begin{align*}
\|f(sx)-f(sy)\|_H^2 &= \|q[e,sx]-q[e,sy]\|_1\\
&\leq \|q[e,sx]+q[sx,sy]+q[sy,e]\|_1 + \|q[sy,sx]+q[sx,s]+q[s,sy]\|_1\\
&\qquad + \|q[s,sx]+q[sy,s]\|_1\\
&\leq 2M + \|q[e,x]-q[e,y]\|_1\\
&= 2M + \|f(x)-f(y)\|_H^2.
\end{align*}
In the last inequality we used the fact that $q$ is $\Gamma$-equivariant and that the $\ell^1$ norm is $\Gamma$-invariant. By Theorem \ref{Thm_suff_cond}, $\Gamma$ has a uniformly bounded representation on $E\subset L^1$ admitting a cocycle $b:\Gamma\to E$ such that
\begin{align*}
\|b(s)\|_E &=\|f(s)-f(e)\|_H +2\\
&= \|q[e,s]\|_1^{1/2}+2,
\end{align*}
for all $s\in\Gamma\setminus\{e\}$. Now fix $s\in\Gamma$ and let $Y$ be the finite subgraph of $X$ given by the support of $q[e,s]$. If $d_X$ and $d_Y$ denote the edge-path distances on $X$ and $Y$ respectively, we have
\begin{align*}
d_Y(e,s)\geq d_X(e,s).
\end{align*}
On the other hand, by Lemma \ref{Lem_norm_alpha},
\begin{align*}
\|q[e,s]\|_1 \geq d_Y(e,s)\geq d_X(e,s).
\end{align*}
Since $s$ was arbitrary, we conclude that
\begin{align*}
\|q[e,s]\|_1 \geq d_X(e,s)
\end{align*}
for all $s\in\Gamma$. This shows that $b$ is proper.
\end{proof}

\section{{\bf From quasi-trees to affine actions on $E\subset L^1$}}\label{S_acyl}

Now we turn to acylindrically hyperbolic groups. Our main tool is the following result of Balasubramanya; see \cite[Theorem 1.7]{Bal}. Recall that a quasi-tree is a graph which is quasi-isometric to a tree when we endow it with the edge-path distance.

\begin{thm}[Balasubramanya]\label{Thm_Bal}
Every acylindrically hyperbolic group admits a non-elementary cobounded acylindrical action on a quasi-tree.
\end{thm}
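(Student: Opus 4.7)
The plan is to upgrade the given non-elementary acylindrical action on an arbitrary hyperbolic space to one on a quasi-tree via the projection complex machinery of Bestvina--Bromberg--Fujiwara. The starting point is the definition of acylindrical hyperbolicity: $G$ admits a non-elementary acylindrical action on some hyperbolic space $X$. A standard consequence is that $G$ contains a loxodromic element $g$ satisfying the weak proper discontinuity property (WPD), and whose elementary closure $E(g)$ --- the unique maximal virtually cyclic subgroup containing $g$ --- is itself virtually cyclic.

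First I would set up the input for the projection complex. Let $\mathcal{F}$ be the $G$-set of conjugates of $E(g)$; for each $H\in\mathcal{F}$ choose a quasi-axis $A_H\subset X$ (the image of a $g$-orbit under a conjugator). The shortest-projection data between the $A_H$'s give a system of projections $\pi_H(K)\subset A_H$ for $K\neq H$. Acylindricity of the action on $X$, together with hyperbolicity of $X$, should ensure that after truncating these projections at a sufficiently large constant the BBF projection axioms hold (bounded projection diameters and a Behrstock-type inequality); the WPD data for $g$ is exactly what is needed to bound the diameter of coarse intersections of distinct axes.

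Next, apply the BBF construction to produce the projection complex $\mathcal{P}(\mathcal{F})$ on the vertex set $\mathcal{F}$. A central theorem of Bestvina--Bromberg--Fujiwara asserts that $\mathcal{P}(\mathcal{F})$ is a quasi-tree. The group $G$ acts on $\mathcal{P}(\mathcal{F})$ by permuting vertices according to its conjugation action on $\mathcal{F}$. Since $\mathcal{F}$ is a single $G$-orbit the action is cobounded, and $g$ is straightforwardly checked to act loxodromically on $\mathcal{P}(\mathcal{F})$ (it translates along the vertices $g^n E(g) = E(g)$ in a meaningful sense only after passing to another element; more properly, the loxodromic behavior is witnessed by any element whose axis has unbounded distinct projections), so the action is non-elementary.

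The main obstacle is verifying acylindricity of the $G$-action on $\mathcal{P}(\mathcal{F})$: given $\varepsilon>0$, I must produce $R,N$ such that any two vertices $u,v$ at distance $\geq R$ in $\mathcal{P}(\mathcal{F})$ are simultaneously moved at most $\varepsilon$ by at most $N$ group elements. The argument translates large quasi-tree displacement back into information about projections of orbits in $X$: a long geodesic in $\mathcal{P}(\mathcal{F})$ forces an element to nearly preserve a long segment of some $A_H$, and the acylindricity/WPD constants from the original action on $X$ then bound the number of group elements that can do so. Controlling this bound uniformly across the different $H\in\mathcal{F}$ is the delicate quantitative step, and it is where the combinatorics of the BBF construction must be combined carefully with the original acylindricity data to complete the argument.
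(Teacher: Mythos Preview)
The paper does not provide a proof of Theorem~\ref{Thm_Bal}; it is quoted from \cite[Theorem~1.7]{Bal} and used as a black box in the proof of Theorem~\ref{Thm_acyl}. There is therefore nothing in the paper to compare your proposal against.

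For what it is worth, your outline is broadly in the spirit of Balasubramanya's actual argument, which also produces the quasi-tree via the Bestvina--Bromberg--Fujiwara projection-complex machinery applied to the $G$-translates of the elementary closure $E(g)$ of a loxodromic WPD element. The two points you yourself flag are the genuine gaps. First, $g$ fixes the vertex $E(g)$ in $\mathcal{P}(\mathcal{F})$ and is therefore elliptic there, so non-elementarity on the quasi-tree does not come from $g$ directly; one needs a separate argument, typically using a second independent loxodromic. Second, acylindricity of the action on $\mathcal{P}(\mathcal{F})$ is the real content, and in \cite{Bal} it is established through the hyperbolically-embedded structure of $E(g)$ and Osin's framework rather than by the direct transfer of displacement bounds from $X$ that you sketch; your proposed route is plausible in outline but would require substantial additional work to make rigorous.
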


This theorem provides a characterisation of acylindrical hyperbolicity by the existence of a non-elementary acylindrical action on a quasi-tree. In particular, such actions have unbounded orbits. With this we can prove Theorem \ref{Thm_acyl}.

\begin{proof}[of Theorem \ref{Thm_acyl}]
By Theorem \ref{Thm_Bal}, $\Gamma$ has an isometric action on a quasi-tree $X$ with unbounded orbits. By \cite[\S 3]{Ver}, there is a conditionally negative definite kernel $d_a:X\times X\to\N$ and a constant $\Delta_X\geq 0$ such that
\begin{align*}
d(x,y)-\Delta_X \leq d_a(x,y) \leq d(x,y),\quad\forall x,y\in X,
\end{align*}
where $d$ is the edge-path distance on $X$. The fact that $d_a$ is conditionally negative definite means that there is a Hilbert space $H$ and a map $f:X\to H$ such that
\begin{align*}
d_a(x,y)=\|f(x)-f(y)\|_H^2,\quad\forall x,y\in X.
\end{align*}
In particular,
\begin{align*}
\|f(s\cdot x)-f(s\cdot y)\|_H^2 \leq \|f(x)-f(y)\|_H^2+\Delta_X, \quad\forall s\in\Gamma,\ \forall x,y\in X.
\end{align*}
Fix $x_0\in X$ and restrict $f$ to the orbit $\Gamma\cdot x_0$. This defines a map from $\Gamma$ to $H$ by
\begin{align*}
s\mapsto f(s\cdot x_0).
\end{align*}
By Theorem \ref{Thm_suff_cond}, $\Gamma$ has a uniformly bounded representation on $E\subset L^1$ admitting a cocycle $b:\Gamma\to E$ such that
\begin{align*}
\|b(s)\|_E^2 &\geq \|f(s\cdot x_0)-f(x_0)\|_H^2\\
&\geq d(s\cdot x_0,x_0)-\Delta_X,
\end{align*}
for all $s\in\Gamma$. Since the action $\Gamma\curvearrowright X$ has unbounded orbits, so does $\Gamma\curvearrowright E$.
\end{proof}

\subsection*{Acknowledgements}
I am grateful to Thomas Delzant for his very valuable comments and suggestions, including the use of bicombings to prove Theorem \ref{Thm_hyp}. I also thank the anonymous referee for pointing out a conceptual mistake in a previous version of this paper, which led to the incorporation of Lemma \ref{Lem_norm_alpha}.

\bibliographystyle{plain} 

\bibliography{Bibliography}

\end{document}